\DeclareMathOperator{\gc}{\mathrm{GCD}}
\DeclareMathOperator{\mo}{\mathrm{mod}}
\DeclareMathOperator{\ar}{\mathrm{Area}}
\DeclareMathOperator{\len}{\mathrm{Length}}
\DeclareMathOperator{\mx}{\mathrm{max}}
\begin{document}
 \title{The Distribution of Special Subsets of the Farey Sequence}
 \author{Alan Haynes}
 \subjclass{11B57}

 \newtheorem{theorem}{Theorem}
 \newtheorem{lemma}{Lemma}
 \newtheorem{corollary}{Corollary}
 \newcommand{\mc}{\mathcal}
\newcommand{\fqp}{\mc{F}_{Q,p}}
\newcommand{\fq}{\mc{F}_Q}
\newcommand{\zpz}{\mathbb{Z}/p\mathbb{Z}}
\newcommand{\mbb}{\mathbb}

\begin{abstract}
We will examine the subset $\mathcal{F}_{Q,p}$ of Farey fractions of order $Q$ consisting of those fractions whose denominators are not divisible by a fixed prime $p$. In particular, we will provide an asymptotic result on the distribution of $H-$tuples of consecutive fractions in $\mathcal{F}_{Q,p}$, as $Q\rightarrow\infty$.
\end{abstract}
\maketitle

\section{Introduction}

For $Q\in\mathbb{N}^*$, the Farey fractions of order $Q$ are defined as  \[\mathcal{F}_Q=\left\{\frac{a}{q}\in\mathbb{Q} : 0\leq\frac{a}{q}\leq 1, 1\leq q\leq Q,\,\mathop{\mathrm{GCD}}(a,q)=1\right\},\]
where $\mathop{\mathrm{GCD}}$ denotes the greatest common divisor function. Farey fractions have been studied in the past mainly for two reasons. First, they are important in the field of diophantine approximation. Second, there is a connection between Farey fractions and the Riemann Hypothesis ([4],[7]). There is significant motivation to study the distribution of subsets of $\mathcal{F}_Q$ satisfying congruence conditions on the numerators and denominators. For example, the analog for Dirichlet L-functions of the results of Franel and Landau ([4],[7]) involve such subsets. As a first attempt to study these kinds of distributions, we might begin by considering a subset of $\mathcal{F}_Q$ defined as \[\mathcal{F}_{Q,\text{odd}}=\left\{\frac{a}{q}\in\mathcal{F}_Q:q\text{ odd}\right\}.\] It is well known that if $a_1/q_1<a_2/q_2$ are consecutive elements of $\mathcal{F}_Q$, then $q_1a_2-a_1q_2=1$. However, this result does not hold for consecutive elements of $\mathcal{F}_{Q,\text{odd}}.$ In view of this, we could define for $k\in\mathbb{N}^*$ the numbers \[N_{Q,\text{odd}}(k)=\#\left\{\frac{a_1}{q_1}<\frac{a_2}{q_2}\text{ consecutive in } \mathcal{F}_{Q,\text{odd}} : q_1a_2-a_1q_2=k\right\}.\] It has been proved [6] that the asymptotic frequencies $\rho _{\text{odd}}(k)$ defined by \[\rho_{\text{odd}}(k)=\lim_{Q\to\infty}\frac{N_{Q,\text{odd}}(k)}{\#\mathcal{F}_{Q,\text{odd}}}\] exist and, further, that they can be computed exactly as \[\rho _{\text{odd}}(k)=\frac{4}{k(k+1)(k+2)}.\] In the introduction of [6], the following two questions were presented as open problems.
\begin{enumerate}
  \item Choose two positive integers $k_1$ and $k_2$ and three consecutive elements $a_1/q_1<a_2/q_2<a_3/q_3$ of $\mathcal{F}_{Q,\text{odd}}$. In the limit as $Q$ goes to infinity, what is the probability that $q_1a_2-a_1q_2=k_1$ and $q_2a_3-a_2q_3=k_2$?
  \item What can be deduced about subsets of $\mathcal{F}_{Q}$ with more general congruence conditions on the numerators and denominators? Is it possible to generalize the above results to include such cases?
\end{enumerate}
An answer to the first question and to the more general problem involving $H-$tuples of consecutive elements in $\mathcal{F}_{Q,\text{odd}}$ was provided in [2]. In this paper we will begin to answer the second question by extending the results in [2] to subsets $\mathcal{F}_{Q,p}$ of $\mathcal{F}_Q$ defined as \[\mathcal{F}_{Q,p}=\left\{\frac{a}{q}\in\mathcal{F}_Q:p\nmid q\right\},\] where $p$ is allowed to be any fixed prime.

\section{Preliminary Work}
Before we can precisely state our main theorem, we will need to make some definitions. Let $\mathcal{T}=\{ (x,y)\in [0,1]\times [0,1]:x+y>1\}$ and for $n\in\mathbb{N}^*$ define the regions \[\mathcal{T}_n=\left\{ (x,y)\in \mathcal{T}:\left[\frac{1+x}{y}\right]=n\right\}.\] Also, define the map $T:\mathcal{T}\rightarrow\mathcal{T}$ by \ \[T(x,y)=\left( y,\left[\frac{1+x}{y}\right] y-x\right) ,\] and given an $R-$tuple $\{ n_1,\ldots ,n_R\}$ of positive integers, define the region \[\mathcal{T}_{n_1,\ldots n_R}=\mathcal{T}_{n_1}\cap T^{-1}\mathcal{T}_{n_2}\cap\cdots\cap T^{1-R}\mathcal{T}_{n_R}.\] Now let us explore some basic properties of the regions that we have defined. Let $a_1/q_1<a_2/q_2<a_3/q_3$ be consecutive elements of $\mc{F}_Q$. Then it follows from [5, Lemma 1] that \[ q_3=\left[\frac{Q+q_1}{q_2}\right] q_2-q_1=\left[\frac{1+q_1/Q}{q_2/Q}\right] q_2-q_1.\] In other words, $T(q_1/Q,q_2/Q)=(q_2/Q,q_3/Q).$ Also, we know from [6] that \[ q_1a_3-a_1q_3=\left[\frac{Q+q_1}{q_2}\right],\] so applying some basic properties of Farey fractions we find that for each positive integer $n,$
\begin{eqnarray}
 & &\#\left\{ \frac{a_1}{q_1}<\frac{a_2}{q_2}<\frac{a_3}
     {q_3} \text{ consecutive in } \mathcal{F}_{Q}: q_1a_3-a_1q_3=n \right\}\nonumber\\
& &=\#\left\{ \frac{a_1}{q_1}<\frac{a_2}{q_2} \text{ consecutive in } \mathcal{F}_{Q}: \left[\frac{Q+q_1}{q_2}\right]=n \right\}\\
& &=\#\left\{ (q_1,q_2)\in\mathbb{Z}^2:1\leq q_1,q_2\leq Q, q_1+q_2>Q,\right.\nonumber\\
& &\qquad\qquad\left.\gc (q_1,q_2)=1, \left[\frac{1+q_1/Q}{q_2/Q}\right]=n \right\}\nonumber\\
& &=\#\{ (x,y)\in Q\mc{T}_n\cap\mathbb{Z}^2:\gc (x,y)=1\}.\nonumber
\end{eqnarray}
As a technical detail, note that if $n=2Q$ then (1) is actually off by $1$, because in this case $(Q-1)/Q$ and $1/1$ are consecutive in $\fq$ and $\left[\frac{Q+q_1}{q_2}\right]=n,$ but there is no fraction after $1/1$ in $\fq$. Since we will eventually be interested in an asymptotic estimate as $Q\rightarrow\infty$, we can ignore this detail in our problem by assuming that $n$ is fixed and that $Q$ is large.

In general, if $R\geq 3$ and $a_1/q_1<\cdots <a_R/q_R$ are consecutive elements of $\mc{F}_Q$ then
\begin{eqnarray*}
& & T^r\left(\frac{q_1}{Q},\frac{q_2}{Q}\right)= \left(\frac{q_{1+r}}{Q},\frac{q_{2+r}}{Q}\right) \text{ and}\\
& & q_ra_{r+2}-a_rq_{r+2}=\left[\frac{Q+q_r}{q_{r+1}}\right],
\end{eqnarray*}
for each $r\in\{ 1,\ldots ,R-2\}.$  A little thought then reveals that, given an $(R-2)-$tuple $\{ n_1,\ldots ,n_{R-2}\}$ of positive integers, we have
\begin{eqnarray}
& &\#\left\{ \frac{a_1}{q_1}<\cdots <\frac{a_R}{q_R} \text{ consecutive in } \mc{F}_Q: \right.\nonumber\\
& & \qquad\qquad\left. q_ra_{r+2}-a_rq_{r+2}=n_r, r\in\{ 1,\ldots ,R-2\} \right\}\nonumber\\
& &=\#\left\{ (x,y)\in Q\mc{T}_{n_1,\ldots ,n_{R-2}}\cap\mathbb{Z}^2:\gc (x,y)=1\right\} .
\end{eqnarray}

Next, for each $H\in\mathbb{N}^*$ and for each $H-$tuple $\Delta =(\Delta_1,\ldots ,\Delta_H)\in (\mathbb{N}^*)^H$, define
\begin{eqnarray*}
N_{Q,p}(\Delta )&=&\#\left\{\frac{a_1}{q_1}<\cdots <\frac{a_{H+1}}{q_{H+1}} \text{ consecutive in } \mc{F}_{Q,p}\right.\\
& &\qquad\qquad\left. :q_ha_{h+1}-a_hq_{h+1}=\Delta_h, h\in\{ 1,\ldots ,H\} \right\}.
\end{eqnarray*}
 A basic question that we are interested in is the following. Given $\Delta =(\Delta_1,\ldots ,\Delta_H)$, which $R-$tuples of consecutive elements $a_1/q_1<\cdots <a_R/q_R$ in $\mc{F}_Q$ give rise to an $(H+1)-$tuple $a_1'/q_1'<\cdots <a_{H+1}'/q_{H+1}'$ of consecutive elements in $\fqp$ with the properties that $a_1'/q_1'= a_1/q_1,$ $ a_{H+1}'/q_{H+1}'= a_R/q_R,$ and $q_h'a_{h+1}'-a_h'q_{h+1}'=\Delta_h$ for each $h\in\{ 1,\ldots ,H\}$? In answering this question we will make use of the fact that for each $r\in\{ 1,\ldots R-1\},$ it is not possible that $p$ divides both $q_r$ and $q_{r+1}.$ This is most easily seen from the identity $q_ra_{r+1}-a_rq_{r+1}=1.$ Note here that one immediate consequence of this fact is that we may restrict our search to those $R$ for which $H+1\leq R\leq 2H+1$. Now, assuming that the elements $a_1/q_1<\cdots <a_R/q_R$ do give rise to an $(H+1)-$tuple with the desired properties, let $\{ r_1,\ldots ,r_{H+1}\}$ be the unique subset of $\{ 1,\ldots ,R\}$ for which $r_1<\cdots <r_{H+1}$ and $p\nmid q_{r_h}$ for each $h\in\{ 1,\ldots ,H+1\}$. Then we have that $a_h'/q_h'=a_{r_h}/q_{r_h}$, and we know that $r_{h+1}$ is equal to either $r_h+1$ or $r_h+2$. If $r_{h+1}=r_h+1$ then we have that \[ q_h'a_{h+1}'-a_h'q_{h+1}'=q_{r_h}a_{r_{h+1}}-a_{r_h}q_{r_{h+1}}=q_{r_h}a_{r_h+1}-a_{r_h}q_{r_h+1}=1,\] and if $r_{h+1}=r_h+2$ then we have that \[ q_h'a_{h+1}'-a_h'q_{h+1}'=q_{r_h}a_{r_{h+1}}-a_{r_h}q_{r_{h+1}}=q_{r_h}a_{r_h+2}-a_{r_h}q_{r_h+2}=\left[\frac{Q+q_{r_h}}{q_{r_{h+1}}}\right] .\] Now we may answer our question. For a given $R-$tuple of consecutive elements in $\mc{F}_Q$ to give rise to an $(H+1)-$tuple of consecutive elements in $\fqp$ as described above, the following conditions are necessary and sufficient.
\begin{enumerate}
\item Neither of $q_1,q_R$ is divisible by $p,$ and $\#\{q_r\in\{q_1,\ldots ,q_R\}:p\nmid q_r\}=H+1,$ and
\item If $\{ r_1,\ldots ,r_{H+1}\}$ is the unique subset of $\{ 1,\ldots ,R\}$ for which $r_1<\cdots <r_{H+1}$ and $p\nmid q_{r_h}$ for any $h\in\{ 1,\ldots ,H+1\},$ then the following two conditions are satisfied for each $h\in\{ 1,\ldots ,H\}$
\begin{enumerate}
\item If $r_{h+1}=r_h+1$ then $\Delta_h=1$, and
\item If $r_{h+1}=r_h+2$ then $\Delta_h=\left[\frac{Q+q_{r_h}}{q_{r_{h+1}}}\right] .$
\end{enumerate}
\end{enumerate}
Next we will develop a little more notation. An $R-$tuple $a_1/q_1<\cdots <a_R/q_R$ of elements consecutive in $\fq$ can be encoded as an element \[ (\alpha ,n)=((\alpha_1,\ldots ,\alpha_R),(n_1,\ldots ,n_{R-2}))\in (\mathbb{Z}/p\mathbb{Z})^R\times (\mathbb{N}^*)^{R-2}\] by requiring that \[ q_r\equiv\alpha_r (\mo p) \text{ for each } r\in\{ 1,\ldots ,R\}, \text{ and that}\] \[n_r=\left[\frac{Q+q_r}{q_{r+1}}\right] \text{ for each } r\in\{ 1,\ldots ,R-2\}.\] With this in mind, we might ask which elements of $(\mathbb{Z}/p\mathbb{Z})^R\times (\mathbb{N}^*)^{R-2}$ represent an $R-$tuple of consecutive elements in $\fq$. Let $(\alpha ,n)= ((\alpha_1,\ldots ,\alpha_R),(n_1,\ldots ,n_{R-2}))$ be any element of $(\mathbb{Z}/p\mathbb{Z})^R\times (\mathbb{N}^*)^{R-2}$. From our previous comments, if for any $r\in\{ 1,\ldots ,R-1\}$, both of $\alpha_r$ and $\alpha_{r+1}$ are equal to $0$ in $\zpz$, then we know that $(\alpha ,n)$ does not represent an $R-$tuple of consecutive elements in $\fq$. Also, since \[ q_{r+2}=\left[\frac{Q+q_r}{q_{r+1}}\right] q_{r+1}-q_r\] for each $r\in\{ 1,\ldots ,R-2\}$, we see that the following four additional conditions must hold if $(\alpha ,n)$ represents an $R-$tuple of consecutive elements in $\fq$.
\begin{enumerate}
\item If $\alpha_r=\alpha_{r+2}=0$ and $\alpha_{r+1}\neq 0$ then $p\mid n_r,$
\item If $\alpha_r=0, \alpha_{r+1}\neq 0,$ and $\alpha_{r+2}\neq 0$ then $n_r\equiv\alpha_{r+2}\alpha_{r+1}^{-1}(\mo p),$
\item If $\alpha_r\neq 0, \alpha_{r+1}\neq 0,$ and $\alpha_{r+2}=0$ then $n_r\equiv\alpha_r\alpha_{r+1}^{-1}(\mo p),$ and
\item If $\alpha_r\neq 0, \alpha_{r+1}\neq 0,$ and $\alpha_{r+2}\neq 0$ then $n_r\equiv (\alpha_{r+2}+\alpha_r)\alpha_{r+1}^{-1}(\mo p).$
\end{enumerate}
If $(\alpha ,n)$ satisfies all of the above conditions then comparing with (2), we find that the number of consecutive $R-$tuples in $\fq$ which are represented by $(\alpha ,n)$ is equal to \[\#\left\{ (x,y)\in Q\mc{T}_{n_1,\ldots ,n_{R-2}}\cap\mathbb{Z}^2:\gc (x,y)=1, (x,y)\equiv (\alpha_1,\alpha_2)(\mo p)\right\}.\] Since we want to isolate those $R-$tuples that give rise to $(H+1)-$tuples in $\fqp$ with the correct $\Delta$ value, we define a subset $\mc{A}_R\subseteq (\zpz )^R\times (\mathbb{N}^*)^{R-2}$ by the condition that $(\alpha ,n)\in\mc{A}_R$ if and only if
\begin{enumerate}
\item $\alpha_1$ and $\alpha_R$ are nonzero,
\item For each $r\in\{ 1,\ldots ,R-1\},$ if $\alpha_r=0$ then $\alpha_{r+1}\neq 0,$
\item The number of $r\in\{ 1,\ldots ,R\}$ for which $\alpha_r\neq 0$ is $H+1,$ and
\item For each $r\in\{ 1,\ldots ,R-2\}$, each of the above four conditions on the congruence class of $n_r(\mo p)$ holds.
\end{enumerate}
Finally, define a function $\delta:\mc{A}_R\rightarrow (\mathbb{N}^*)^H$ as follows. Given $(\alpha ,n)\in\mc{A}_R$, let $\{ r_1,\ldots ,r_{H+1}\}$ be the unique subset of $\{ 1,\ldots ,R\}$ for which $r_1<\cdots <r_{H+1}$ and $\alpha_{r_h}\neq 0$ for each $h\in\{ 1,\ldots ,H+1\}$. Then let $\delta (\alpha ,n)=(\delta_1,\ldots ,\delta_H),$ where \[\delta_h=\begin{cases} 1 \text{ if } r_{h+1}=r_h+1,\\ n_{r_h} \text{ if } r_{h+1}=r_h+2
\end{cases}.\]
From our above discussion, we have shown that
\begin{eqnarray}
N_{Q,p}(\Delta )=\sum_{R=H+1}^{2H+1}\sum_{\substack{(\alpha ,n)\in\mc{A}_R\\ \delta (\alpha ,n)=\Delta}}N_{\alpha_1,\alpha_2}^p(Q\mc{T}_{n_1,\ldots ,n_{R-2}}),
\end{eqnarray}
where
\begin{eqnarray*}
N_{\alpha_1,\alpha_2}^p(Q\mc{T}_{n_1,\ldots ,n_{R-2}})&=&\#\left\{ (x,y)\in Q\mc{T}_{n_1,\ldots ,n_{R-2}}\cap\mathbb{Z}^2:\right.\\
& &\qquad\qquad\left.\gc (x,y)=1, (x,y)\equiv (\alpha_1,\alpha_2)(\mo p) \right\}.
\end{eqnarray*}
We now state the main theorem to be proved in this paper.
\begin{theorem}
For any prime $p$, $H\in\mathbb{N}^*$, and $\Delta\in (\mathbb{N}^*)^H,$ we have that
\[\rho_p(\Delta ):=\frac{ N_{Q,p}(\Delta )}{\#\mc{F}_{Q,p}}=\sum_{R=H+1}^{2H+1}\sum_{\substack{(\alpha ,n)\in\mc{A}_R\\ \delta (\alpha ,n)=\Delta}} \frac{2\ar (\mc{T}_{n_1,\ldots ,n_{R-2}})}{p(p-1)}+O_H\left(\frac{\log^2Q}{Q}\right) .\]
\end{theorem}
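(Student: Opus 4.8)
The plan is to work directly from the exact identity (3), which already expresses $N_{Q,p}(\Delta)$ as a finite sum over $R$ of sums over $\mc{A}_R$ of the congruence-restricted visible-point counts $N^p_{\alpha_1,\alpha_2}(Q\mc{T}_{n_1,\ldots,n_{R-2}})$, and to pair it with an asymptotic formula for $\#\fqp$. The core of the argument is therefore a single estimate: for a fixed cylinder region $\mc{T}_{n_1,\ldots,n_{R-2}}$ and a fixed class $(\alpha_1,\alpha_2)$ with $\alpha_1\neq 0$,
\[N^p_{\alpha_1,\alpha_2}(Q\mc{T}_{n_1,\ldots,n_{R-2}})=\frac{6Q^2\,\ar(\mc{T}_{n_1,\ldots,n_{R-2}})}{\pi^2(p^2-1)}+E_{\alpha,n}(Q),\]
where the error $E_{\alpha,n}(Q)$ must be controlled well enough to survive summation over the infinite family $\mc{A}_R$.

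First I would prove this single-region estimate by removing the coprimality constraint through M\"obius inversion, writing the indicator of $\gc(x,y)=1$ as $\sum_{d\mid\gc(x,y)}\mu(d)$ and substituting $x=dx'$, $y=dy'$. Because $\alpha_1\neq 0$, the congruence $dx'\equiv\alpha_1\ (\mo p)$ forces $p\nmid d$, so only $d$ coprime to $p$ contribute; for such $d$ the pair $(x',y')$ is pinned to the single class $(d^{-1}\alpha_1,d^{-1}\alpha_2)$ modulo $p$ and ranges over $(Q/d)\mc{T}_{n_1,\ldots,n_{R-2}}$. Counting points of a shifted copy of $p\mbb{Z}^2$ in a dilated region yields a main term $\ar((Q/d)\mc{T}_{n_1,\ldots,n_{R-2}})/p^2$ with a discrepancy governed by the boundary, and summing the main terms over $d$ produces
\[\frac{Q^2\,\ar(\mc{T}_{n_1,\ldots,n_{R-2}})}{p^2}\sum_{\substack{d\geq 1\\ p\nmid d}}\frac{\mu(d)}{d^2}=\frac{Q^2\,\ar(\mc{T}_{n_1,\ldots,n_{R-2}})}{p^2}\cdot\frac{6}{\pi^2}\cdot\frac{p^2}{p^2-1},\]
the Euler factor at $p$ having been deleted from $1/\zeta(2)$. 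This is the claimed main term.

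For the denominator I would use $\#\fqp=1+\sum_{q\leq Q,\,p\nmid q}\phi(q)$ together with the standard mean-value estimate for $\phi$ restricted to moduli coprime to $p$, obtaining $\#\fqp=\frac{3p}{\pi^2(p+1)}Q^2+O(Q\log Q)$; here the constant $\frac{3p}{\pi^2(p+1)}=\frac12\cdot\frac{6}{\pi^2}\cdot\frac{p}{p+1}$, where $p/(p+1)$ is the value at $s=2$ of the deleted Euler factor of the series $\zeta(s-1)/\zeta(s)$ for $\sum\phi(q)q^{-s}$. Dividing the single-region main term by this denominator gives exactly $\frac{2\ar(\mc{T}_{n_1,\ldots,n_{R-2}})}{p(p-1)}$, which matches the summand in the theorem; expanding the quotient shows the $O(Q\log Q)$ denominator error contributes only $O(\log Q/Q)$ to $\rho_p(\Delta)$.

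The main obstacle is the error analysis, precisely because the inner sum in (3) runs over the \emph{infinite} set $\{(\alpha,n)\in\mc{A}_R:\delta(\alpha,n)=\Delta\}$: the constraints fix the gap coordinates $n_{r_h}$ to the prescribed values $\Delta_h$, but the remaining coordinates are free and range over complete arithmetic progressions modulo $p$. The main terms cause no difficulty, since $\sum_{(\alpha,n)}\ar(\mc{T}_{n_1,\ldots,n_{R-2}})$ converges, the cylinders being disjoint and contained in $\mc{T}$. The errors, however, cannot be summed region by region, because the perimeters of these thin cylinders accumulate near the corner $(1,0)$ and its $T$-preimages and are not individually summable. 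Instead I would fix $d$ and $\alpha$, interchange the sum over the free coordinates with the lattice-point count, and estimate the discrepancy of the \emph{union} $\bigcup_n\mc{T}_{n_1,\ldots,n_{R-2}}$ at scale $Q/d$ in a single step. Adapting the lattice-point estimates of [2] for these cylinder sets, now carrying the extra congruence modulo $p$ (which merely rescales the relevant lattice), should give a per-$d$ discrepancy of size $O_H\bigl(\frac{Q}{d}\log\frac{Q}{d}\bigr)$: the first logarithm arises from the accumulation of the boundary hyperbolas, and summing over $d\leq Q$ against $\sum_d 1/d$ produces the second. This double accumulation is the source of the $O_H(\log^2Q/Q)$ term and is the step I expect to demand the most care.
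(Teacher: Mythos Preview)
Your main-term analysis via M\"obius inversion and your asymptotic for $\#\fqp$ coincide with the paper's Lemma~1 and equation~(6), and the division producing the summand $2\ar(\mc{T}_{n_1,\ldots,n_{R-2}})/(p(p-1))$ is carried out just as in the paper.

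Where you depart from the paper is in the error analysis, and the departure rests on a misplaced concern. You assert that the per-region boundary errors ``cannot be summed region by region'' and therefore propose to interchange the M\"obius sum with the sum over $n$ and bound the discrepancy of the \emph{union} of cylinders in one stroke. In fact the paper does sum region by region, using three geometric facts borrowed from [2] and [3] and packaged as Lemma~2: (i) $\len(\partial\mc{T}_{n_1,\ldots,n_{R-2}})\ll_R 1/n_1$ uniformly in $n_2,\ldots,n_{R-2}$; (ii) for each fixed $n_1$ there are only $O_R(1)$ tuples $(n_2,\ldots,n_{R-2})$ with $\mc{T}_{n_1,\ldots,n_{R-2}}\neq\emptyset$; and (iii) $Q\mc{T}_{n_1,\ldots,n_{R-2}}$ contains no lattice points once $\max_r n_r>2Q$. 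Combining (i) with your own single-region estimate gives a per-region error $O_R((Q/n_1)\log Q)$; by (ii) this is also the total error over all admissible tuples sharing a given $n_1$; and by (iii) one need only sum over $n_1\le 2Q$, obtaining
\[
\sum_{n_1\le 2Q} O_R\!\left(\frac{Q\log Q}{n_1}\right)=O_R(Q\log^2 Q).
\]
So the second logarithm arises from $\sum_{n_1\le 2Q}1/n_1$, not from the M\"obius sum, and no union-based discrepancy argument is required. Your alternative route would likely go through, but it is strictly more delicate than necessary and would in any case lean on the same boundary estimates from~[2] that already make the direct summation work.
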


The special case of $H=1$ can be formulated as a corollary.

\begin{corollary}
For any prime $p$,
\begin{align*}
\rho_p(1)&= 1-\frac{2}{3p}+O\left(\frac{\log^2Q}{Q}\right) ,\text{ and }\\ \rho_p(k)&=\frac{8}{p}\cdot\frac{1}{k(k+1)(k+2)}+ O\left(\frac{\log^2Q}{Q}\right)\quad\text{ for }k\geq 2.
\end{align*}
\end{corollary}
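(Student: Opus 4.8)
The plan is to deduce the corollary directly from the Theorem by specializing to $H=1$ and evaluating the resulting finite sum, so that the only real work is an enumeration of low-dimensional index sets together with three elementary area computations. With $H=1$ the admissible range of $R$ is $\{H+1,\ldots,2H+1\}=\{2,3\}$, and $\Delta=(k)$ is a single positive integer, so the right-hand side of the Theorem splits into a group of terms for $R=2$ and a group for $R=3$, plus the error $O(\log^2Q/Q)$.

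First I would enumerate $\mc{A}_2$ and $\mc{A}_3$ together with the map $\delta$. For $R=2$ there are no $n$-coordinates, the region is all of $\mc{T}$, and $\mc{A}_2$ consists of the $(p-1)^2$ pairs $(\alpha_1,\alpha_2)$ with both entries nonzero; since $r_2=r_1+1$ we have $\delta\equiv(1)$, so these terms contribute only to $\rho_p(1)$. For $R=3$ there is one coordinate $n_1$, and the condition that exactly $H+1=2$ of the $\alpha_r$ be nonzero forces $\alpha=(\alpha_1,0,\alpha_3)$ with $\alpha_1,\alpha_3\neq 0$. The crux of the argument is that $\alpha_3$ is \emph{not} a free parameter: from $q_3\equiv n_1q_2-q_1\,(\mo p)$ and $\alpha_2=0$ we get $\alpha_3\equiv-\alpha_1\,(\mo p)$, so $\alpha_3$ is determined by $\alpha_1$ and there are exactly $p-1$ admissible tuples, not $(p-1)^2$. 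Getting this multiplicity right is essential, since an erroneous factor $p-1$ would produce a spurious $p$-dependence in a purely geometric area. Because $r_2=r_1+2$ here, $\delta=(n_1)$, so the constraint $\delta(\alpha,n)=(k)$ fixes $n_1=k$ and the associated region is $\mc{T}_k$.

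The remaining substance is the evaluation of $\ar(\mc{T})$, $\ar(\mc{T}_1)$ and $\ar(\mc{T}_k)$ for $k\ge 2$. Each $\mc{T}_k$ is a polygon, cut out by $x+y>1$, $0\le x,y\le1$ and $\tfrac{1+x}{k+1}<y\le\tfrac{1+x}{k}$, so these are elementary integrals. One finds $\ar(\mc{T})=\tfrac12$ and, for $k\ge2$, $\ar(\mc{T}_k)=\tfrac{4}{k(k+1)(k+2)}$. The case $k=1$ must be treated on its own, because the upper constraint $y\le\tfrac{1+x}{1}=1+x$ is then superseded by $y\le1$; carrying out that integral gives $\ar(\mc{T}_1)=\tfrac16$, which differs from the value $\tfrac{4}{1\cdot2\cdot3}=\tfrac23$ predicted by the general formula. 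I expect this $k=1$ boundary effect to be the only point in the computation where genuine care is required.

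Finally I would assemble the pieces. For $k\ge2$ only $R=3$ contributes, giving
\[\rho_p(k)=(p-1)\cdot\frac{2\ar(\mc{T}_k)}{p(p-1)}+O\!\left(\frac{\log^2Q}{Q}\right)=\frac{2\ar(\mc{T}_k)}{p}+O\!\left(\frac{\log^2Q}{Q}\right)=\frac{8}{p\,k(k+1)(k+2)}+O\!\left(\frac{\log^2Q}{Q}\right).\]
For $k=1$ both $R=2$ and $R=3$ contribute, giving
\[\rho_p(1)=(p-1)^2\cdot\frac{2\ar(\mc{T})}{p(p-1)}+(p-1)\cdot\frac{2\ar(\mc{T}_1)}{p(p-1)}+O\!\left(\frac{\log^2Q}{Q}\right)=\left(1-\frac1p\right)+\frac{1}{3p}+O\!\left(\frac{\log^2Q}{Q}\right),\]
which simplifies to $1-\tfrac{2}{3p}+O(\log^2Q/Q)$, exactly the two asserted formulas.
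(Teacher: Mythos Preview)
Your derivation is correct and is exactly the computation the paper leaves implicit: the corollary is stated without proof, merely introduced as ``the special case $H=1$'' of Theorem~1, so your specialization to $R\in\{2,3\}$, enumeration of $\mc{A}_2$ and $\mc{A}_3$, and the elementary area evaluations $\ar(\mc{T})=\tfrac12$, $\ar(\mc{T}_1)=\tfrac16$, $\ar(\mc{T}_k)=\tfrac{4}{k(k+1)(k+2)}$ for $k\ge 2$ constitute precisely the intended argument. In particular, your observation that $\alpha_3\equiv -\alpha_1\ (\mo p)$ when $\alpha_2=0$---so that $\mc{A}_3$ contributes $p-1$ rather than $(p-1)^2$ tuples for each $n_1$---is the correct reading of the paper's recursion $q_{r+2}=n_rq_{r+1}-q_r$, and without it the final constants would not match.
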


\section{Asymptotic Estimates}
We will prove Theorem 1 from (3) by establishing asymptotic estimates for the numbers $N_{\alpha_1,\alpha_2}^p(Q\mc{T}_{n_1,\ldots ,n_{R-2}})$ and $\#\fqp$. Let $\Omega\subseteq\mbb{R}^2$ be a convex region contained in the square $[0,Q]\times [0,Q].$ Then for any prime $p$ and any integers $a\in\{ 1,\ldots ,p-1\}$ and $ b\in\{ 0,\ldots ,p-1\}$ define the numbers
\begin{eqnarray*}
N_p(\Omega )&=&\#\{ (x,y)\in\Omega\cap\mbb{Z}^2:\gc (x,y)=1, p\nmid x\}, \text{ and}\\
N_{a,b}^p(\Omega )&=&\#\{ (x,y)\in\Omega\cap\mbb{Z}^2:\gc (x,y)=1, (x,y)\equiv (a,b)(\mo p )\}.
\end{eqnarray*}
We begin by proving the following lemma.
\begin{lemma}
If $L$ is the length of the boundary of $\Omega$ then we have that
\begin{eqnarray}
N_p(\Omega )&=&\frac{p}{p+1}\cdot\frac{6\ar (\Omega )}{\pi^2}+O(L\log Q), \text{ and}\\
N_{a,b}^p(\Omega )&=&\frac{1}{p^2-1}\cdot\frac{6\ar (\Omega )}{\pi^2}+O(L\log Q).
\end{eqnarray}
\end{lemma}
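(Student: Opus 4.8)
\emph{Proof plan.} The plan is to prove both estimates in parallel, by Möbius inversion on the coprimality condition followed by a lattice-point count in dilated copies of $\Omega$. First I would write $\gc(x,y)=1$ as $\sum_{d\mid\gc(x,y)}\mu(d)$ and interchange summation, so that grouping the lattice points $(x,y)$ with $d\mid x$, $d\mid y$ as $(x,y)=d(u,v)$ with $(u,v)\in\frac1d\Omega\cap\mbb{Z}^2$ gives
\[ N_p(\Omega)=\sum_{d\ge 1}\mu(d)\,\#\{(u,v)\in\tfrac1d\Omega\cap\mbb{Z}^2:p\nmid du\}, \]
and similarly for $N_{a,b}^p(\Omega)$ with the congruences $du\equiv a$, $dv\equiv b\pmod{p}$ in place of $p\nmid du$. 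The key initial observation is that every term with $p\mid d$ vanishes: in the first sum $d\mid x$ would force $p\mid x$, contradicting $p\nmid x$, and in the second it would force $x\equiv 0\pmod{p}$, impossible since $a\in\{1,\dots,p-1\}$. Thus both sums are supported on $d$ with $p\nmid d$, and for such $d$ the condition $p\nmid du$ becomes simply $p\nmid u$, while $du\equiv a$, $dv\equiv b\pmod{p}$ becomes $u\equiv ad^{-1}$, $v\equiv bd^{-1}\pmod{p}$, a single residue class.

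Next I would estimate each inner count by the standard fact that a convex region contains $\ar+O(\text{perimeter}+1)$ lattice points. Restricting to one residue class modulo $p$ amounts to counting in a translate of the sublattice $p\mbb{Z}^2$, so rescaling by $1/p$ turns that count into $\frac{1}{p^2}\cdot\frac{\ar(\Omega)}{d^2}+O\!\left(\frac{L}{pd}+1\right)$; for $N_p$ the condition $p\nmid u$ removes one of $p$ residues in the first coordinate and so yields density $\frac{p-1}{p}$ and main term $\frac{p-1}{p}\cdot\frac{\ar(\Omega)}{d^2}$. Summing the main terms over $p\nmid d$ reduces the whole computation to
\[ \sum_{\substack{d\ge 1\\ p\nmid d}}\frac{\mu(d)}{d^2}=\prod_{q\ne p}\left(1-\frac{1}{q^2}\right)=\frac{6/\pi^2}{1-p^{-2}}=\frac{6}{\pi^2}\cdot\frac{p^2}{p^2-1}. \]
With this the constants collapse exactly as claimed: for $N_{a,b}^p$ one gets $\frac{1}{p^2}\cdot\frac{6}{\pi^2}\cdot\frac{p^2}{p^2-1}=\frac{1}{p^2-1}\cdot\frac{6}{\pi^2}$, and for $N_p$ the factor $\frac{p-1}{p}\cdot\frac{p^2}{p^2-1}$ simplifies to $\frac{p}{p+1}$, producing (4) and (5).

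The hard part will be controlling the accumulated error. Each inner count carries a boundary error $O(L/d+1)$, and the Möbius sum may be truncated at $d\le\sqrt2\,Q$ since $\frac1d\Omega$ contains no lattice points beyond the diameter of $\Omega$. Because $\Omega$ is convex and lies in $[0,Q]\times[0,Q]$, monotonicity of perimeter under inclusion gives $L\le 4Q$, so $\log L=O(\log Q)$ and the main part $\sum_{d\le L}O(L/d)$ contributes $O(L\log L)=O(L\log Q)$. The remaining range $L<d\le\sqrt2\,Q$ is where I expect the real work: there $\frac1d\Omega$ has diameter less than one (a convex region has diameter at most half its perimeter) and so contains at most one lattice point, making each term $O(1)$, while the corresponding tail of main terms is $O(\ar(\Omega)/L)=O(L)$ by the isoperimetric inequality. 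To keep these short-range terms within $O(L\log Q)$ one cannot merely bound them in absolute value—for a region concentrated near a single highly divisible lattice point there can be many nonzero terms—but must instead use the cancellation in $\sum_{d\mid g}\mu(d)$, reflecting that such a region contributes essentially nothing to a count of \emph{coprime} points; in the regime $L\asymp Q$ relevant to the application this point is immaterial since $L\log Q$ dominates. Combining the main term with this error then yields (4) and (5).
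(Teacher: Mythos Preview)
Your argument is correct and follows the same M\"obius-inversion-plus-lattice-count route as the paper; the one organizational difference is that the paper proves (5) first and then obtains (4) in a single line by writing $N_p(\Omega)=\sum_{a=1}^{p-1}\sum_{b=0}^{p-1}N_{a,b}^p(\Omega)=p(p-1)\,N_{1,0}^p(\Omega)$, rather than running a parallel density-$\tfrac{p-1}{p}$ computation for $N_p$. The paper also quietly records the per-term lattice error as $O(L/(pd))$ without the extra $+O(1)$ you flag, so your discussion of the tail range $L<d\ll Q$ is in fact more scrupulous than the original.
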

\begin{proof}
First we prove (5). Because of the fact that for any positive integer $n,$
\[\sum_{d\mid n}\mu (d)=\begin{cases}1 \text{ if } n=1\\ 0 \text{ if } n>1\end{cases},\] we can write \[ N_{a,b}^p(\Omega )=\sum_{\substack{(x,y)\in\Omega \\(x,y)\equiv (a,b)(\mo p)}}\sum_{d\mid x,y}\mu (d).\] Interchanging the order of summation and noting that $x,y\leq Q$, we have that \[ N_{a,b}^p(\Omega )=\sum_{d=1}^Q\mu (d)\cdot\#\{ (x,y)\in\Omega\cap\mbb{Z}^2:(x,y)\equiv (a,b)(\mo p),d\mid x,y\}.\] Now observe that if $p\mid d$ then \[\#\{ (x,y)\in\Omega\cap\mbb{Z}^2:(x,y)\equiv (a,b)(\mo p), d\mid x,y\}=0,\] since $a$ is not congruent to $0$ modulo $p$. On the other hand, if $p\nmid d$, then the conditions $(x,y)\equiv (a,b)(\mo p)$ and $d\mid x,y$ can be combined into the new condition $(x,y)\equiv (a',b')(\mo pd),$ for some $a',b'\in\mbb{Z}$. The number of pairs $(x,y)\in\Omega$ satisfying this new condition is \[\ar\left(\frac{\Omega}{pd}\right)+O\left(\frac{L}{pd}\right),\] so we have that
\begin{eqnarray*}
N_{a,b}^p(\Omega )&=&\sum_{\substack{1\leq d\leq Q\\(p,d)=1}}\mu (d)\left(\frac{\ar (\Omega )}{(pd)^2}+O\left(\frac{L}{pd}\right)\right)\\
&=&\frac{\ar (\Omega )}{p^2}\sum_{\substack{1\leq d\leq Q\\(p,d)=1}}\frac{\mu (d)}{d^2}+O\left(\sum_{d=1}^Q\frac{L}{d}\right)\\
&=&\frac{\ar (\Omega )}{p^2}\left(\sum_{\substack{d\geq 1\\(p,d)=1}}\frac{\mu (d)}{d^2}+O\left(\frac{1}{Q}\right)\right) +O(L\log Q)\\
&=& \frac{1}{p^2-1}\cdot\frac{6\ar (\Omega )}{\pi^2}+O(L\log Q),
\end{eqnarray*}
where we have used the fact that \[\sum_{\substack{d\geq 1\\(p,d)=1}}\frac{\mu (d)}{d^2}=\frac{\zeta (2)^{-1}}{1-p^{-2}}.\] Finally, we have that
\begin{eqnarray*}
N_p(\Omega )&=&\sum_{a=1}^{p-1}\sum_{b=0}^{p-1} N_{a,b}^p(\Omega )
=p(p-1)N_{1,0}^p(\Omega )\\
&=&\frac{p}{p+1}\cdot\frac{6\ar (\Omega )}{\pi^2}+O(L\log Q).
\end{eqnarray*}
\end{proof}
Since $\fqp$ can be defined as \[\fqp=\{(a,q)\in\mbb{Z}^2:1\leq q\leq Q, 0\leq a\leq q, \gc (a,q)=1, p\nmid q\},\] an immediate consequence of (4) is that
\begin{eqnarray}
\#\fqp =\frac{p}{p+1}\cdot\frac{3Q^2}{\pi^2}+O(Q\log Q).
\end{eqnarray}
We will also need to borrow a few miscellaneous results, which are compiled in the following lemma.
\begin{lemma} Let $R\in\{ 3,4,\ldots\}$ and $(n_1,\ldots n_{R-2})\in (\mbb{N}^*)^{R-2}.$
\begin{enumerate}
\item If $n_r\geq 4R-6$ for any $r\in\{ 1,\ldots ,R-2\}$,  then $\mc{T}_{n_1,\ldots ,n_{R-2}}=\phi,$ unless $n_1=\cdots =n_{r-2}=n_{r+2}=\cdots =n_{R-2}=2$ and $n_{r-1}=n_{r+1}=1.$
\item If $\mx (n_1,\ldots ,n_{R-2})>2Q$ then $Q\mc{T}_{n_1,\ldots ,n_{R-2}}\cap\mbb{Z}^2=\phi$.
\item We have that \[\len (\delta\mc{T}_{n_1,\ldots ,n_{R-2}})<<_R\frac{1}{n_1},\] uniformly in $n_2,\ldots n_{R-2}$ as $n_1\rightarrow\infty$.
\end{enumerate}
\end{lemma}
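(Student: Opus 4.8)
The three parts are essentially independent, and I would handle them in increasing order of difficulty, disposing of (2) and (3) quickly and concentrating the work on (1).

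For part (2) the argument is only a few lines. Suppose $(x,y)\in Q\mathcal{T}_{n_1,\ldots ,n_{R-2}}\cap\mathbb{Z}^2$. Set $q_1=x$, $q_2=y$ and $q_{j+2}=n_jq_{j+1}-q_j$, so that the orbit relation $T^{j-1}(q_1/Q,q_2/Q)=(q_j/Q,q_{j+1}/Q)\in\mathcal{T}$ holds for every $j$. Hence each $q_j$ is an integer with $0\leq q_j\leq Q$, and the condition $q_j/Q+q_{j+1}/Q>1$ forces $q_{j+1}\geq 1$ (otherwise $q_j>Q$). Membership at the $r$-th step gives $\left[\frac{Q+q_r}{q_{r+1}}\right]=n_r$, so $\frac{Q+q_r}{q_{r+1}}\geq n_r$. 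If $n_r>2Q$ then $Q+q_r\geq q_{r+1}n_r>2Q$, contradicting $q_r\leq Q$; thus no such lattice point exists.

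For part (3) the key structural observation is that $\mathcal{T}_{n_1,\ldots ,n_{R-2}}$ is \emph{convex}. Indeed $\mathcal{T}_{n_1}$ is cut out by the linear inequalities $0\leq x\leq 1$, $0\leq y\leq 1$, $x+y>1$, and $\frac{1+x}{n_1+1}<y\leq\frac{1+x}{n_1}$, so it is a convex polygon; moreover on $\mathcal{T}_{n_1}$ the map $T$ is the single affine map $(x,y)\mapsto(y,n_1y-x)$, so each successive condition $T^{k}(x,y)\in\mathcal{T}_{n_{k+1}}$ pulls the polygon $\mathcal{T}_{n_{k+1}}$ back through an affine map and contributes only finitely many more linear inequalities. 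Hence $\mathcal{T}_{n_1,\ldots ,n_{R-2}}$ is a finite intersection of half-planes, so convex, and it is contained in $\mathcal{T}_{n_1}$. Since one checks $\mathcal{T}_{n_1}\subseteq\left[\frac{n_1-1}{n_1+1},1\right]\times\left[0,\frac{2}{n_1}\right]$, a box with both side lengths $\ll 1/n_1$, and since perimeter is monotone under inclusion of planar convex bodies, I would conclude $\len(\delta\mathcal{T}_{n_1,\ldots ,n_{R-2}})\leq\len(\delta\mathcal{T}_{n_1})\ll 1/n_1$, uniformly in $n_2,\ldots ,n_{R-2}$.

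Part (1) is the substantive statement, and here I would pass to the real orbit. If $(x,y)$ lies in a nonempty $\mathcal{T}_{n_1,\ldots ,n_{R-2}}$, put $\xi_1=x,\xi_2=y$ and $\xi_{j+2}=n_j\xi_{j+1}-\xi_j$, so $(\xi_j,\xi_{j+1})=T^{j-1}(x,y)\in\mathcal{T}$; thus $0\leq\xi_j\leq 1$, $\xi_j+\xi_{j+1}>1$, and $n_j=\left[\frac{1+\xi_j}{\xi_{j+1}}\right]$. Assume $n_r\geq 4R-6$. From $n_r\leq\frac{1+\xi_r}{\xi_{r+1}}\leq\frac{2}{\xi_{r+1}}$ we get $\xi_{r+1}\leq 2/n_r$, whence $\xi_r,\xi_{r+2}>1-\xi_{r+1}$ are both near $1$. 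Substituting into the floor formulas pins the relevant ratios strictly between $1$ and $2$ once $n_r>6$, forcing $n_{r-1}=n_{r+1}=1$, and one further step forces $n_{r\pm 2}=2$. I would then run an induction outward from $r$: writing $\varepsilon=\xi_{r+1}$, the forced values $1,2,2,\ldots$ make the orbit march along the diagonal with $\xi_{r+k}=\xi_{r+2}-(k-2)\varepsilon$, and the constraint $\xi_{r+k}+\xi_{r+k+1}>1$ together with the floor computation keeps pinning the value $2$, provided the index stays within a distance $\asymp n_r$ of $r$. The threshold $n_r\geq 4R-6$ is exactly what makes the forcing survive out to indices $1$ and $R-2$; so any admissible tuple must be the stated pattern, and otherwise some $n_j$ disagrees with its forced value and the region is empty. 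The backward direction reduces to the forward one, since reading $\xi_j=n_j\xi_{j+1}-\xi_{j+2}$ backwards shows the reversed sequence obeys the same recurrence with reversed indices.

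The main obstacle is the bookkeeping in (1): making the outward induction precise and tracking the accumulating error $\asymp k\varepsilon$ sharply enough to certify that $4R-6$ is a sufficient threshold in both directions simultaneously. Parts (2) and (3) carry no real difficulty beyond the convexity observation, which is what lets the perimeter bound be read off by monotonicity rather than by estimating curved boundary arcs directly.
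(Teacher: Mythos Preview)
The paper does not actually prove this lemma: it cites \cite{fB03} (here reference [2], Lemma~3.4) for parts (1) and (3) and \cite{fB02} (reference [3], Remark~2.3) for part (2). So your route is genuinely different in that you supply self-contained arguments rather than quoting.

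Your arguments for (2) and (3) are correct and clean. For (2), the passage from $(x,y)\in Q\mathcal{T}_{n_1,\ldots ,n_{R-2}}$ to an integer orbit $q_1,\ldots ,q_R$ with $1\le q_j\le Q$ and $n_r=\bigl[(Q+q_r)/q_{r+1}\bigr]\le 2Q$ is exactly right. For (3), the observation that $T$ is affine on each $\mathcal{T}_{n}$, hence $\mathcal{T}_{n_1,\ldots ,n_{R-2}}$ is a convex polygon, together with the containment $\mathcal{T}_{n_1}\subseteq\bigl[\tfrac{n_1-1}{n_1+1},1\bigr]\times\bigl[0,\tfrac{2}{n_1}\bigr]$ and perimeter monotonicity for nested convex bodies, gives the bound without any dependence on $n_2,\ldots ,n_{R-2}$ or on $R$; this is arguably neater than what is typically written out, and it even removes the apparent $R$-dependence in the statement.

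For (1) your strategy is the right one and matches what the cited source does, but as you yourself flag, the outward induction is only sketched. Two places deserve care. First, the step ``forcing $n_{r-1}=1$'' does not follow from the floor inequality alone (the naive bound $(1+\xi_{r-1})/\xi_r\le 2/(1-2/n_r)$ can exceed $2$); you need to feed the recurrence $\xi_{r+1}=n_{r-1}\xi_r-\xi_{r-1}$ back in and argue that $n_{r-1}\ge 2$ would make $\xi_{r-1}\ge 2\xi_r-\xi_{r+1}>1$. Second, the quantitative claim that the threshold $4R-6$ suffices to push the forcing all the way to indices $1$ and $R-2$ requires tracking the drift $\xi_{r+k}=\xi_{r+2}-(k-2)\varepsilon$ precisely in both directions and checking the floor values stay pinned; this is routine but must be written out to justify the specific constant. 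With those two points filled in, your proof of (1) would be complete.
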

\begin{proof}
For the proofs of 1 and 3, see [2, Lemma 3.4]. Statement 2 follows from [3, Remark 2.3]
\end{proof}
Now we have the tools necessary to prove Theorem 1.
\begin{proof}[Proof of Theorem 1]
Applying (5) to the region $Q\mc{T}_{n_1,\ldots ,n_{R-2}}$ and using statement 3 of Lemma 2, we have that
\begin{eqnarray}
N_{a,b}^p(Q\mc{T}_{n_1,\ldots ,n_{R-2}})&=&\frac{6\ar (Q\mc{T}_{n_1,\ldots ,n_{R-2}})}{\pi^2(p^2-1)}+O_R\left(\frac{Q\log Q}{n_1}\right)\nonumber\\
&=&\frac{6Q^2\ar (\mc{T}_{n_1,\ldots ,n_{R-2}})}{\pi^2(p^2-1)}+O_R\left(\frac{Q\log Q}{n_1}\right).
\end{eqnarray}
Now, statement 1 of Lemma 2 implies that, given $n_1\in\mbb{N}^*$, the number of elements $(n_2,\ldots ,n_{R-2})\in (\mbb{N}^*)^{R-3}$ for which $\mc{T}_{n_1,\ldots ,n_{R-2}}\neq\phi$ is bounded above by a constant that depends on $R$. Recall that in our problem, $R$ is also bounded above by $2H+1.$ Combining this result with (3), with (7), and with statement 2 of Lemma 2, we have that
\begin{eqnarray*}
N_{Q,p}(\Delta )&=&\sum_{R=H+1}^{2H+1}\sum_{\substack{(\alpha ,n)\in\mc{A}_R\\ \delta (\alpha ,n)=\Delta}}N_{\alpha_1,\alpha_2}^p(Q\mc{T}_{n_1,\ldots ,n_{R-2}})\\
&=& \sum_{R=H+1}^{2H+1}\sum_{\substack{(\alpha ,n)\in\mc{A}_R\\ \delta (\alpha ,n)=\Delta}}\left(\frac{6Q^2\ar (\mc{T}_{n_1,\ldots ,n_{R-2}})}{\pi^2(p^2-1)} +O_R\left(\frac{Q\log Q}{n_1}\right)\right)\\
&=& \frac{6Q^2}{\pi^2(p^2-1)}\sum_{R=H+1}^{2H+1}\sum_{\substack{(\alpha ,n)\in\mc{A}_R\\ \delta (\alpha ,n)=\Delta}}\ar (\mc{T}_{n_1,\ldots ,n_{R-2}}) +O_H\left(\sum_{m=1}^{2Q}\frac{Q\log Q}{m}\right)\\
&=& \frac{6Q^2}{\pi^2(p^2-1)}\sum_{R=H+1}^{2H+1}\sum_{\substack{(\alpha ,n)\in\mc{A}_R\\ \delta (\alpha ,n)=\Delta}}\ar (\mc{T}_{n_1,\ldots ,n_{R-2}}) +O_H\left(Q\log^2 Q\right).
\end{eqnarray*}
Putting this result together with (6) proves Theorem 1.
\end{proof}

\end{document}